\newtheorem*{theorem*}{Theorem}
\DeclareRobustCommand{\stirling}{\genfrac\{\}{0pt}{}}
\title{A simple proof of almost sure convergence\\for the largest singular value of a product of\\Gaussian matrices}
\date{August 2024}
\author{Thiziri Nait Saada$^*$}
\author{Alireza Naderi$^*$}
\affil{Mathematical Institute, University of Oxford}
\date{}
\begin{document}

\maketitle

\def\thefootnote{*}\footnotetext{Equal contribution.}\def\thefootnote{\arabic{footnote}}

\begin{abstract}
    Let $m \geq 1$ and consider the product of $m$ independent $n \times n$ matrices $\mathbf{W} = \mathbf{W}_1 \dots \mathbf{W}_m$, each $\mathbf{W}_{i}$ with i.i.d.~normalised $\mathcal{N}(0, n^{-1/2})$ entries. It is shown in \cite{Penson_2011} that the empirical distribution of the squared singular values of $\mathbf{W}$ converges to a deterministic distribution compactly supported on $[0, u_m]$, where $u_m \coloneqq \frac{{(m+1)}^{m+1}}{m^m}$. This generalises the well-known case of $m=1$, corresponding to the Marchenko-Pastur distribution for square matrices. Moreover, for $m=1$, it was first shown by \cite{Geman} that the largest squared singular value almost surely converges to the right endpoint (the so-called ``soft edge'') of the support, i.e. $s_1^2(\mathbf{W}) \xrightarrow{a.s.} u_1$. Herein, we present a proof for the general case $s_1^2(\mathbf{W}) \xrightarrow{a.s.} u_m$ for $m\geq 1$. Although we do not claim novelty for our result, the proof is simple and does not require familiarity with modern techniques of free probability.

\end{abstract}

\section{Introduction}

The analysis of the spectral properties of random matrices is generally approached from two distinct perspectives, each requiring its own set of techniques. The macroscopic approach examines the global behaviour of eigenvalues, focusing on their limiting ``bulk'' density as the matrix size grows, while the microscopic perspective zooms in on finer details, studying local fluctuations, eigenvalue spacing statistics, and behaviour near special regions such as the spectral ``edge''. Despite the differences in scale, universality phenomena arise in both macroscopic and microscopic analyses; however, the behaviour at the bulk and the edge can vary depending on the ensemble under consideration. For instance, the ensemble of random Markov matrices studied in \cite{Bordenave_2011} exhibits the same limiting bulk distribution as Ginibre matrices, yet its largest eigenvalue behaves differently, remaining constrained to $1$.

The product of $m$ independent random matrices with i.i.d. Gaussian entries has been studied extensively due to its usefulness in a broad range of applications. On the macroscopic level, the limiting spectral distribution of such an ensemble was first studied by Penson and \.{Z}yczkowski \cite{Penson_2011}, where they explicitly derive the density function for arbitrary integer $m \geq 1$, the $k^\text{th}$ moment of which is given by the Fuss-Catalan number $FC_{m}(k) \coloneqq \frac{1}{mk+1} \binom{mk+k}{k}$. In the case of a single matrix ($m=1$), one recovers Marchenko-Pastur distribution (for square matrices) with Catalan numbers $C(k) \coloneqq \frac{1}{k+1} \binom{2k}{k}$ as moments. On the microscopic level, Akemann, Ipsen, and Kieburg \cite{Akemann_2013} derived the joint probability density function of the eigenvalues of such matrices as a determinantal point process with a correlation kernel given in terms of Meijer G-functions. Their explicit formulae for all correlation functions at finite size enabled the study of microscopic properties such as hard edge \cite{kuijlaars2014singular}, bulk, and soft edge \cite{liu2016bulk} scaling limits.

Using the soft edge scaling limit, one can establish the almost sure convergence of the largest singular value through a concentration-of-measure argument. However, a more straightforward approach relies solely on the moments of the squared singular value distribution, as previously demonstrated for a single Ginibre matrix with i.i.d. entries. The first known proof of this result was given by Geman \cite{Geman}, followed by further refinements that progressively weakened the moment assumptions on the matrix entries, e.g., \cite{jonsson1982some, Bai_1988}. Ultimately, it was shown in \cite{Yin_1988} that a bounded fourth moment is a necessary condition. 

In this work, we build on Geman's approach. Rather than relying on an intricate analysis of the distribution near the soft edge, we establish the almost sure convergence directly through the moments of the squared singular value distribution at finite $n$, as also presented in \cite{Akemann_2013}. This proof is more accessible and employs a combinatorial argument that is interesting in its own right.

\section{Theorem and Its Proof}

\begin{theorem*}\label{theorem}
    Let $m\geq 1$. Consider  $\mathbf{W}_1, \dots, \mathbf{W}_m \in \mathbb{R}^{n \times n}$ be independent Gaussian matrices with i.i.d. $\mathcal{N}(0,n^{-1/2})$ entries. Then, almost surely, 
    \begin{equation}
        s_1^2(\mathbf{W}_1 \dots \mathbf{W}_m) \xrightarrow{n\to \infty} u_m.
    \end{equation}
\end{theorem*}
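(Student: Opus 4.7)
My plan is to adapt Geman's classical moment method, with two independent halves: I would prove $\liminf s_1^2(\mathbf{W}) \geq u_m$ and $\limsup s_1^2(\mathbf{W}) \leq u_m$ almost surely, separately, and combine them.

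The $\liminf$ direction is the easier one. Since the empirical squared singular value distribution converges weakly to the Fuss–Catalan law supported on $[0,u_m]$ by \cite{Penson_2011}, no open neighbourhood of $u_m$ can be empty of mass, so $s_1^2(\mathbf{W}) \geq u_m - \epsilon$ should hold eventually. The quickest route avoiding soft-edge machinery is the trace sandwich $s_1^{2k}(\mathbf{W}) \geq \tfrac{1}{n}\mathrm{Tr}\bigl((\mathbf{W}\mathbf{W}^T)^k\bigr)$; taking $n\to\infty$ at fixed $k$ yields $\liminf_n s_1^{2k}(\mathbf{W}) \geq FC_m(k)$ almost surely (using convergence of moments), and then letting $k\to\infty$ together with the Stirling asymptotic $FC_m(k)^{1/k}\to u_m$ gives the desired bound.

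The $\limsup$ direction is the heart of the proof. I would start from the trivial bound $s_1^{2k}(\mathbf{W}) \leq \mathrm{Tr}\bigl((\mathbf{W}\mathbf{W}^T)^k\bigr)$ and apply Markov's inequality:
\begin{equation*}
\mathbb{P}\!\left(s_1^2(\mathbf{W}) > u_m + \epsilon\right) \;\leq\; \frac{\mathbb{E}\!\left[\mathrm{Tr}\bigl((\mathbf{W}\mathbf{W}^T)^k\bigr)\right]}{(u_m+\epsilon)^k}.
\end{equation*}
Everything then reduces to controlling the finite-$n$ trace moment. Expanding via Wick's theorem one obtains a sum over pair-partitions of the $2mk$ Gaussian entries, each contributing a factor $n^{-mk}$ from the normalisation and a power of $n$ from the number of free indices in the resulting ribbon graph. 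The leading (planar) term gives $n\cdot FC_m(k)$, matching the limiting bulk, and the whole expectation is a polynomial in $1/n$; this is the finite-$n$ formula in \cite{Akemann_2013} and, as the Stirling number macro defined in the preamble hints, its coefficients admit a clean combinatorial expression.

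The main obstacle, as I see it, is bounding the \emph{entire} polynomial, not just its leading term, uniformly as $k$ grows with $n$. Concretely, writing
\begin{equation*}
\mathbb{E}\!\left[\mathrm{Tr}\bigl((\mathbf{W}\mathbf{W}^T)^k\bigr)\right] \;=\; \sum_{g\geq 0} A_{m,g}(k)\, n^{1-2g},
\end{equation*}
I would need an estimate of the form $|A_{m,g}(k)| \leq u_m^k \cdot k^{\beta_m g}$ for some $\beta_m$ depending only on $m$, summed over $g \leq mk/2$. Inserting this into Markov's inequality and choosing $k = k_n \sim c\log n$ with $c$ large enough should yield a bound of order $n^{-\delta}$ for some $\delta>0$, summable in $n$, so that Borel–Cantelli finishes the argument. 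The combinatorial bookkeeping for non-planar diagrams is more delicate than in the single-matrix case because of the $m$ layers of matching constraints coming from the product structure; keeping the polynomial-in-$k$ growth of the higher-genus coefficients under control is where I expect the real work to lie.
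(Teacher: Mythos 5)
Your overall skeleton matches the paper exactly: split into $\liminf$ and $\limsup$, handle $\liminf$ via weak convergence to the Fuss--Catalan law, and prove $\limsup$ via Markov's inequality on $\mathbb{E}[\mathrm{Tr}((\mathbf{W}\mathbf{W}^T)^{k_n})]$ with $k_n \sim c \log n$ and Borel--Cantelli. That part is correct and is Geman's strategy, as you say.

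The gap is precisely where you flag it: the bound $|A_{m,g}(k)| \leq u_m^k \, k^{\beta_m g}$ on the genus-expansion coefficients is asserted, not derived, and it is the entire content of the $\limsup$ direction. For $m=1$ this kind of diagram count is what Geman actually carried out, but for products of $m$ matrices the Wick pairings live on a $(2mk)$-point set with $m$ layers of matching constraints, the planarity/genus bookkeeping is substantially more intricate, and it is not even immediate that the coefficients $A_{m,g}(k)$ are all nonnegative so that the triangle inequality over $g$ can be applied crudely. Without a proof of this coefficient estimate, the chain from Markov to Borel--Cantelli does not close.

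The paper avoids the diagrammatic expansion entirely. It takes the closed-form finite-$n$ moment $G(m,n,k)$ from Akemann--Ipsen--Kieburg, rewrites the alternating sum in terms of Stirling numbers of the second kind $\stirling{r}{k-1}$ and coefficients $\beta_r$ of the polynomial $\prod_{i=0}^{k-1}(1-\tfrac{i}{n}+x)^{m+1}$, and shows (using log-concavity of Stirling numbers to control $\stirling{r+1}{k-1}/\stirling{r}{k-1}$, and the asymptotics $\beta_r \sim \binom{k(m+1)}{r}$ valid when $k^2 = o(n)$) that consecutive terms in the sum decay by a factor $O(n^{-\varepsilon})$. Hence the sum is dominated by its $r = k-1$ term, giving $G(m,n,k_n) \sim \sqrt{\tfrac{m+1}{2\pi m^3}}\, u_m^{k_n} k_n^{-3/2}$ by Stirling's formula, which then feeds directly into Borel--Cantelli with $k_n = \lceil w\log n\rceil$, $w > 3/\log(z/u_m)$. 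If you want to complete your version, you should either prove the coefficient estimate you conjectured (nontrivial, essentially a new combinatorial lemma) or replace the genus expansion by the Akemann et al.\ finite-$n$ formula, which is what makes the paper's argument short.
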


\begin{proof} We denote by $\mathbf{W}$ the product $\mathbf{W}_1 \dots \mathbf{W}_m$. That almost surely $$\varliminf_{n \to \infty} s_1^2(\mathbf{W}) \geq u_m$$ is straightforward. Suppose otherwise that $s_1^2(\mathbf{W}) = \max_{i} s_i^2(\mathbf{W}) < u_m$ for infinitely many $n$ and there will be a subsequence of empirical distributions whose supports are strictly contained in $[0, u_m]$, which is a contradiction. Now we must show that $$\varlimsup_{n \to \infty} s_1^2(\mathbf{W}) \leq u_m$$ almost surely. Fix $z > u_m$. Following Geman's strategy, we demonstrate $\mathbb{P}\big( \varlimsup_{n \to \infty} s_1^2(\mathbf{W}) \geq z \big)=0$ using Borel-Cantelli lemma. For any $k \geq 1$,
    \begin{align*}
    \mathbb{P}\Big(s_1^2(\mathbf{W}) \geq z\Big) = \mathbb{P}\Big(\big(\frac{s_1^2(\mathbf{W})}{z} \big)^k \geq 1 \Big) \leq \mathbb{E}\Big(\big(\frac{s_1^2(\mathbf{W})}{z} \big)^k\Big)
\end{align*}
by Markov's inequality.
Therefore, to exhibit an almost sure convergence, it suffices to show 
\begin{align*}
    \sum_{n} \mathbb{E}\Big(\big(\frac{s_1^2(\mathbf{W})}{z} \big)^k\Big) < \infty,
\end{align*}
for some $k$. We will shortly see how we can carefully choose $k=k_n$ to make the above sum converge. Namely, the remainder of the proof is devoted to establishing 

\begin{align}\label{WTS}
     \sum_{n} \mathbb{E}\Big(\big(\frac{s_1^2(\mathbf{W})}{z} \big)^{k_n}\Big) < \infty,
\end{align}
where, 
\begin{align*}
     k_n = \lceil{w \log n \rceil},
\end{align*}
and
\begin{align*}
     w>\frac{3}{\log(z/u_m)}.
\end{align*}

We may simply bound a term of the series in~\eqref{WTS} by the $k_n$-th moment of the empirical (non-limiting) distribution $\mu_n$ of the squared singular values of $\mathbf{W}$, i.e.
\begin{align*}
    \mathbb{E}\Big(\big(s_1^2(\mathbf{W}) \big)^{k_n}\Big) & \leq \mathbb{E}\Big(\sum_{i=1}^n \big(s_i^2(\mathbf{W}) \big)^{k_n}\Big) \\
    & = n \mathbb{E}\Big(\frac{1}{n}\sum_{i=1}^n \big(s_i^2(\mathbf{W}) \big)^{k_n}\Big)\\
    & = n G(m,n,k_n),
\end{align*}
where $G(m, n, k_n)$ is the $k_n$-th moment of $\mu_n$.
We borrow the computation of $G(m, n, k_n)$ from \cite[Eq.~(58)]{Akemann_2013}, where it is worked out for the general case of the product of rectangular Gaussian matrices.\footnote{In the case where all matrices are square, it is well known that $G(m,n,k)$ converges to the Fuss-Catalan number,
\begin{align*}
    \mathrm{FC}_m(k) \coloneqq \frac{1}{mk+1} \binom{mk+k}{k},
\end{align*}
for any fixed $k$ and $m$; see \cite{Penson_2011}.} The calculation from \cite{Akemann_2013} provides us with the following non-asymptotic formula, valid for any integer $k \geq 1$,
\begin{align*}
   n^{mk+1} G(m,n,k) = \sum_{i=0}^{n-1} \frac{(-1)^{1+i} \prod_{j=0}^{n-1} (j-k-i)}{i! (n-1-i)! k} \times \Big(\frac{\Gamma(k+i+1)}{\Gamma(i+1)} \Big)^m,
\end{align*}
which can be further simplified as,
\begin{align*}
   n^{mk+1} G(m,n,k) &= \frac{1}{k!} \sum_{i=n-k}^{n-1} (-1)^{n+1+i} \Big(\frac{(k+i)!}{i!} \Big)^{m+1} \binom{k-1}{k+i-n} \\
   &= \frac{1}{k!} \sum_{j=0}^{k-1} (-1)^{k+1-j} \Big(\frac{(n+j)!}{(n+j-k)!} \Big)^{m+1} \binom{k-1}{j} \\
   \begin{split} &= \frac{1}{k!} \sum_{j=0}^{k-1} (-1)^{k+1-j} \binom{k-1}{j} \\
 & \hspace{1.5cm} \times \big((n+j)(n+j-1)\dots (n+j-k+1) \big)^{m+1} \end{split}\\
   \begin{split}&= \frac{n^{k(m+1)}}{k!} \sum_{j=0}^{k-1} (-1)^{k+1-j} \binom{k-1}{j}\\
   & \hspace{1.5cm} \times \big((1+\frac{j}{n})(1-\frac{1}{n}+\frac{j}{n})\dots (1-\frac{k-1}{n}+\frac{j}{n}) \big)^{m+1}.
   \end{split}
\end{align*}

We now introduce $\beta_r$ as the coefficient of $x^r$ in the expansion of the polynomial $P(x) \coloneqq \prod_{i=0}^{k-1} (1-\frac{i}{n} + x)^{m+1}$. Let $\mathcal{R}$ be the multiset of the $k(m+1)$ roots of $P$ (counted with multiplicity), then each $\beta_r$ can be explicitly written as, 

\begin{align*}
    \beta_r = \sum_{\substack{S \subseteq \mathcal{R}\\|S| = k(m+1)-r}} \prod_{i \in S} (-i).
\end{align*}
Provided $k \leq n$, all roots of $P$ are negative with magnitude in $[1-\frac{k-1}{n}, 1]$. Therefore, for all $0 \leq r \leq k(m+1)$, we have
\begin{equation*}\label{upper_bound_beta}
   \binom{k(m+1)}{r} \big(1-\frac{k-1}{n}\big)^{k(m+1)-r} \leq \beta_r \leq \binom{k(m+1)}{r},
\end{equation*}
which yields the asymptotic equivalence\footnote{In the following, we write $f(n)\sim g(n)$ whenever $\lim_{n\to\infty} f(n)/g(n) = 1$.}
\begin{equation}\label{growth_beta}
    \beta_r \sim \binom{k(m+1)}{r},
\end{equation}
whenever $k^2 = o(n)$.
Substituting back into the last formula, we get 
\begin{align*}
   n^{mk+1} G(m,n,k) &= \frac{n^{k(m+1)}}{k!} \sum_{j=0}^{k-1} (-1)^{k+1-j} \binom{k-1}{j} \sum_{r=0}^{k(m+1)} \beta_r {\big(\frac{j}{n}\big)}^r   \\
   &= \frac{n^{k(m+1)}}{k!} \sum_{r=0}^{k(m+1)} n^{-r} \beta_r \sum_{j=0}^{k-1} (-1)^{k+1-j} \binom{k-1}{j} j^r.
\end{align*}
The above alternating sums are known to be equal to Stirling numbers of the second kind $\stirling{n}{k}$, defined as the number of ways to partition a set of $n$ objects into $k$ non-empty subsets. They are given by
\begin{equation*}
    \stirling{n}{k} = \frac{1}{k!}\sum_{i=0}^{k} (-1)^{k-i} \binom{k}{i} i^n .
\end{equation*}
Thus, we can rewrite our original quantity as
\begin{equation*}
    n^{mk+1} G(m,n,k) = \frac{n^{k(m+1)} (k-1)!}{k!} \sum_{r=0}^{k(m+1)} n^{-r} \beta_r \stirling{r}{k-1}.
\end{equation*}
By definition, it is clear that $\stirling{n}{k} = 0$ for $k>n$, therefore, 
\begin{align}\label{mid_point_sum}
    n^{mk+1} G(m,n,k) &= \frac{n^{k(m+1)}}{k} \sum_{r=k-1}^{k(m+1)} n^{-r} \beta_r \stirling{r}{k-1}.
\end{align}

We now move on to showing that for our specific choice of $k=k_n$, the latter sum is asymptotically dominated by its first term. To this end, we demonstrate that each term in the sum is dominated by the term immediately preceding it. It is sufficient to show, for any $k-1 \leq r\leq km+k-1$,

\begin{align}
    \frac{\beta_{r+1} \stirling{r+1}{k-1} n^{-(r+1)}}{\beta_{r} \stirling{r}{k-1} n^{-r}}< C n^{-\varepsilon},
\end{align}
for some $\varepsilon >0$ and $C$ independent of $n$.

First, observe the recurrence relation satisfied by Stirling numbers of the second kind, which immediately gives, 

\begin{align}\label{ratio_stirling}
    \frac{\stirling{r+1}{k-1}}{\stirling{r}{k-1}} = k-1 + \frac{\stirling{r}{k-2}}{\stirling{r}{k-1}}.
\end{align}
Among other properties, Stirling numbers of the second kind are shown in \cite{lieb1968concavity} to be logarithmically concave, meaning, 
\begin{align*}
    \stirling{n}{k}^2 \geq \stirling{n}{k+1} \stirling{n}{k-1},
\end{align*}
for any $k=2, \dots, n-1$. This directly implies that,
\begin{align*}
    \frac{\stirling{n}{k-1}}{\stirling{n}{k}} \leq \frac{\stirling{n}{k}}{\stirling{n}{k+1}} \leq \dots \leq \frac{\stirling{n}{n-1}}{\stirling{n}{n}} = \binom{n}{2}.
\end{align*}
Back to our problem, we can thus bound the ratio in Eq.~\eqref{ratio_stirling} as follows:
\begin{align*}
    \frac{\stirling{r+1}{k-1}}{\stirling{r}{k-1}} = k-1 + \binom{r}{2} \leq \frac{1}{2} r(r+1).
\end{align*}
To bound the ratio $\frac{\beta_{r+1}}{\beta_r}$, since $k_n = \lceil w \log(n) \rceil$ satisfies $k_n^2=o(n)$, we can use Eq.~\eqref{growth_beta} to write
\begin{align*}
    \frac{\beta_{r+1}}{\beta_r} \sim \frac{\binom{k(m+1)}{r+1}}{\binom{k(m+1)}{r}} = \frac{k(m+1) - r}{r+1} < m+1.
\end{align*}
Altogether, 
\begin{align*}
     \frac{n^{-(r+1)}\beta_{r+1} \stirling{r+1}{k-1}}{n^{-r}\beta_{r} \stirling{r}{k-1}} < n^{-1} \frac{m+1}{2}(r+1)^2.
\end{align*}
Therefore, if we restrict the growth of $r$ (and hence that of $k_n$) with $n$ such that $r+1 < n^{(1- \varepsilon)/2}$, we get
\begin{align}
     \frac{n^{-(r+1)}\beta_{r+1} \stirling{r+1}{k-1}}{n^{-r}\beta_{r} \stirling{r}{k-1}} < \frac{m+1}{2} n^{-\varepsilon},
\end{align}
which is sufficient to prove what needs to be shown. Our specific choice for $k_n = \lceil w \log(n) \rceil$ satisfies this constraint.

Having proven that each term in the sum in Eq.~\eqref{mid_point_sum} is dominated by the preceding one, this sum can be effectively approximated by its first term, i.e.

\begin{align*}
    n^{mk_n+1} G(m, n, k_n) & = \frac{n^{k_n(m+1)}}{k_n} \sum_{r=k_n-1}^{k_n(m+1)} n^{-r} \beta_r \stirling{r}{k_n-1} \\
    &= \frac{n^{k_n(m+1)}}{k_n} n^{-(k_n-1)} \beta_{k_n-1} \big(1 + o(1)\big) \\
    &= n^{mk_n+1} \frac{\beta_{k_n-1}}{k_n} \big(1 + o(1)\big),
\end{align*}
or simply $$G(m, n, k_n) = \frac{\beta_{k_n-1}}{k_n} \big(1 + o(1)\big).$$
Given that $k_n\to \infty$ as $n$ grows, we can use Stirling's approximation formula to get
\begin{align}\label{approx_beta}
    \frac{\beta_{k_n-1}}{k_n} & \sim \frac{1}{k_n} \binom{k_n(m+1)}{k_n - 1} = \frac{\big(k_n(m+1)\big)!}{k_n! (mk_n+1)!} \nonumber\\
    & \sim \frac{\sqrt{2\pi k_n(m+1)}}{\sqrt{2\pi k_n} \sqrt{2\pi (mk_n +1)}} \frac{\big(k_n(m+1)\big)^{k_n(m+1)}}{k_n^{k_n} (mk_n+1)^{k_nm+1}}\nonumber\\
    & \sim \sqrt{\frac{m+1}{2\pi m^3}} \frac{u_m^{k_n}}{k_n^{3/2}}.
\end{align}
Ultimately, 

\begin{align*}
    \mathbb{E}\Big(\big(s_1^2(\mathbf{W}) \big)^{k_n}\Big) & \leq n G(m,n,k_n) \\
    & = n\frac{\beta_{k_n-1}}{k_n} \big(1 + o(1)\big) \\
    & = \sqrt{\frac{m+1}{2\pi m^3}} \frac{n}{k_n^{3/2}} u_m^{k_n} \big(1 + o(1)\big).
\end{align*}

Substituting back in the Borel-Cantelli sum in~\eqref{WTS}, it remains to show that $$\sum_n \frac{n}{k_n^{3/2}} \Big(\frac{u_m}{z}\Big)^{k_n} <\infty$$ for our choice of $k_n = \lceil \frac{3}{\log(z/u_m)} \log n \rceil$. 
Precisely, since $z/u_m > 1$,
\begin{align*}
    \log \Bigg[ \frac{n}{k_n^{3/2}} \Big(\frac{u_m}{z}\Big)^{k_n} \Bigg] &= \log n - \frac{3}{2} \log k_n + k_n \log \big( \frac{u_m}{z} \big) \\
    & \leq \log n + k_n \log \big( \frac{u_m}{z} \big) \\
    & = \log n - k_n \log \big( \frac{z}{u_m} \big) \\
    & \leq \log n - \frac{3 \log n}{\log(z/u_m)} \log \big( \frac{z}{u_m} \big) \\
    & = -2 \log n,
\end{align*}
hence the series converges and the proof is complete. 
\end{proof}

\subsection*{Acknowledgement}
The authors thank Jon Keating for providing feedback on the manuscript.

\printbibliography
\end{document}